\newtheorem{thm}{Theorem}[section]
\newtheorem{lem}[thm]{Lemma}
\newtheorem{prop}[thm]{Proposition}
\theoremstyle{definition}
\theoremstyle{remark}
\numberwithin{equation}{section}
\newcommand{\delete}[1]{} % Comment out text.
\newcommand{\ben}{\begin{enumerate}}
\newcommand{\een}{\end{enumerate}}
\def\Homeo{{\mathrm{Homeo}}\,}
\def\QED{\nobreak\quad\ifmmode\roman{Q.E.D.}\else{\rm Q.E.D.}\fi}
\newcommand{\Bcal}{\mathcal{B}}
\newcommand{\Dcal}{\mathcal{D}}
\newcommand{\Fcal}{\mathcal{F}}
\newcommand{\Gcal}{\mathcal{G}}
\newcommand{\Pcal}{\mathcal{P}}
\newcommand{\Qcal}{\mathcal{Q}}
\newcommand{\Rcal}{\mathcal{R}}
\newcommand{\Scal}{\mathcal{S}}
\newcommand{\C}{\mathbb{C}}
\newcommand{\N}{\mathbb{N}}
\newcommand{\Z}{\mathbb{Z}}
\newcommand{\al}{\alpha}
\newcommand{\Ga}{\Gamma}
\newcommand{\ga}{\gamma}
\newcommand{\del}{\delta}
\newcommand{\ep}{\varepsilon}
\newcommand{\sig}{\sigma}
\newcommand{\Sig}{\Sigma}
\newcommand{\la}{\lambda}
\newcommand{\tet}{\theta}
\newcommand{\Tet}{\Theta}
\newcommand{\om}{\omega}
\newcommand{\Om}{\Omega}
\newcommand{\br}{\vspace{4 mm}}
\newcommand{\rest}{\upharpoonright}
\newcommand{\ch}{\mathbf{1}}
\newcommand{\id}{\rm{id}}
\begin{document}

\title
[On two problems concerning topological centers]
{On two problems concerning topological centers}
%{On two problems presented to me at Castellon}

%Authors
%    Information for first author
\author[Eli Glasner]{Eli Glasner}
\address{Department of Mathematics,
Tel-Aviv University, Ramat Aviv, Israel}
\email{glasner@math.tau.ac.il}
%\urladdr{http://www.math.tau.ac.il/$^\sim$glasner}

\date{July 7, 2005}

\subjclass[2000]{Primary 54H20, Secondary 22A15}

\keywords{Topological center, \v{C}ech-Stone compactification, Ellis
group, distal systems}

\thanks{Research supported by ISF grant \# 1333/07.}

\begin{abstract}
Let $\Ga$ be an infinite discrete group and $\beta\Ga$ its
\v{C}ech-Stone compactification. Using the well known fact that
a free ultrafilter on an infinite set is nonmeasurable, we show
that for each element $p$ of the remainder $\beta\Ga \setminus \Ga$,
left multiplication $L_p:\beta\Ga \to \beta\Ga$ is
not Borel measurable.
Next assume that $\Ga$ is abelian.
Let $\Dcal \subset \ell^\infty(\Ga)$ denote the subalgebra of distal
functions on $\Ga$ and let $D=\Ga^\Dcal=|\Dcal|$ denote the
corresponding universal distal (right topological group)
compactification of $\Ga$.
Our second result is that the topological center of
$D$ (i.e. the set of $p\in D$ for which
$L_p:D \to  D$ is a continuous map)
is the same as the algebraic center and that
for $\Ga=\Z$, this center coincides
with the canonical image of $\Ga$ in $D$.
\end{abstract}

\begin{date}
{August 6, 2007}
\end{date}

\thanks{{\em 2000 Mathematical Subject Classification:
Primary 54H20, Secondary 22A15}}

\keywords{Topological center, \v{C}ech-Stone
compactification, Ellis group, distal systems}

\maketitle
\section{Introduction}
This short note is a direct outcome of the topology
conference held in Castell\'on in the summer of 2007.
I was presented during the conference with two problems
relating to the topological center of certain right
topological semigroups.
(A compact semigroup $A$ such that for each $p\in A$
the corresponding right multiplication
$R_p: q \mapsto qp$ is continuous is called a {\it right
topological semigroup}. The collection of
elements $p\in A$ for which
the corresponding left multiplication
$L_p: q \mapsto pq$ is continuous is called
the {\it topological center} of $A$.)
The first was a question of Michael Megrelishvili:
Given an infinite discrete group $\Ga$, which
are the elements of $\beta\Ga$ for which $L_p:\beta\Ga
\to \beta\Ga$ is a Baire class 1 map? (It is known that
the topological center of $\beta\Ga$ is exactly $\Ga$ itself,
considered as a subset of $\beta\Ga$, see e.g. \cite{HS}.)
The second problem is due to Mahmoud Filali: If $D=D(\Ga)$ is the
universal distal Ellis group of $\Ga$, identify the topological
center of $D$.

I present here a complete answer to Megrelishvili's problem,
based on the well known result that
a free ultrafilter on an infinite set is nonmeasurable,
and an answer to Filali's problem in the case $\Ga=\Z$, the group of
integers.

The interested reader is referred to \cite[chapter 1]{Gl}
and the bibliography list thereof, for more information on the
abstract theory of topological dynamics, and to
\cite{HS} for information concerning $\beta\Ga$.

I thank both Megrelishvili and Filali for addressing to me these nice
problems. I also thank the organizers of the Castell\'on meeting
for the formidable effort they put into the details of
the conference and for their warm hospitality.

\br

\section{On the center of $\beta\Ga$}

\begin{thm}\label{M}
Let $\Ga$ be an infinite discrete group and $\beta\Ga$ its
\v{C}ech-Stone compactification. For each element $p$
of the remainder $\beta\Ga \setminus \Ga$,
left multiplication $L_p:\beta\Ga \to \beta\Ga$ is
not Borel measurable.
\end{thm}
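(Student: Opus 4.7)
The plan is to suppose $L_p$ is Borel measurable and derive a contradiction with the classical theorem (Sierpi\'nski) that a free ultrafilter on an infinite set, viewed as a subset of $2^\Ga$ via characteristic functions, is not Lebesgue (Haar) measurable. After a reduction to the case $\Ga$ countable (by restricting to a countably infinite subgroup $\Ga_0 \le \Ga$, inside whose $\beta\Ga_0 \subseteq \beta\Ga$ the argument runs in the same way), I would first select $A \subseteq \Ga$ such that the family of translates $\{g^{-1}A : g \in \Ga\}$ is Boolean-independent in $\Pcal(\Ga)$; such an $A$ exists by a generic/random construction in any infinite group.

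Next I introduce the continuous map $F_A : \beta\Ga \to 2^\Ga$ defined by $F_A(q)(g) = 1$ iff $g^{-1}A \in q$. Continuity is clear since each coordinate is the indicator of the clopen set $\widehat{g^{-1}A}$, and the independence of the translates together with compactness forces $F_A$ to be surjective. The definition of multiplication in $\beta\Ga$ (namely $A \in p\cdot q$ iff $\{g : g^{-1}A \in q\} \in p$) yields, after identifying $p$ with its associated subset of $2^\Ga$, the key identity
$$
 L_p^{-1}(\widehat{A}) \;=\; F_A^{-1}(p).
$$
Under the assumption that $L_p$ is Borel measurable, this set is Borel in $\beta\Ga$.

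To finish I would transfer the Borel structure on $\beta\Ga$ back to $2^\Ga$ by a measurable section of $F_A$: for the Bernoulli (Haar) measure $\mu$ on $2^\Ga$ I want a $\mu$-measurable map $s : 2^\Ga \to \beta\Ga$ with $F_A \circ s = \id$. Given such an $s$, the pullback $s^{-1}(F_A^{-1}(p)) = p$ would be $\mu$-measurable, contradicting Sierpi\'nski's theorem. The main obstacle is precisely the construction of this measurable section: because $\beta\Ga$ is not Polish, the standard Kuratowski--Ryll-Nardzewski and Jankov--von Neumann selection theorems do not apply off the shelf. I would address this by first producing, via Hahn--Banach extension of the Bernoulli-integration functional through the isometric embedding $C(2^\Ga) \hookrightarrow C(\beta\Ga)$ induced by $F_A$ and the Riesz representation theorem, a Radon probability $\nu$ on $\beta\Ga$ with $(F_A)_* \nu = \mu$, and then invoking a measurable selection principle adapted to Radon probabilities on compact Hausdorff spaces (for instance, via disintegration of $\nu$ along the continuous surjection $F_A$) to obtain the required $\mu$-measurable section.
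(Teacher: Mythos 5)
Your proposal is essentially the paper's proof. Your engine is the same (a free ultrafilter, viewed inside $\{0,1\}^\Ga$, is non-measurable for the Bernoulli measure $\mu$ -- the paper's Lemma \ref{m-filters}); your map $F_A$ is exactly the paper's extended orbit map $\hat\psi$ (the paper takes $A=D$ with $\{\ch_{\ga D}:\ga\in\Ga\}$ dense in $\Om$, which is your independence condition for the translates, up to a left/right reflection); and your key identity $L_p^{-1}(\widehat A)=F_A^{-1}(p)$ is precisely the paper's computation $\Qcal=(J\circ\hat\psi)^{-1}(p)$. So the skeleton is correct and matches the source; the remarks below concern only the two places where you deviate.

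First, the reduction to a countable subgroup is both unnecessary and, as stated, broken: $L_p$ carries $\beta\Ga_0$ back into $\beta\Ga_0$ only when $\Ga_0\in p$, and if $\Ga$ is uncountable, $p$ need not contain any countable set, so there may be no countable subgroup ``inside which the argument runs in the same way.'' Simply drop it: the zero-one law and the symmetric inner/outer-measure argument of Lemma \ref{m-filters} work verbatim for the product measure on $\{0,1\}^\Ga$ with $\Ga$ of arbitrary cardinality. Second, the $\mu$-measurable section $s$ is heavier machinery than you need, and its existence for a continuous surjection of a non-Polish compactum is itself a nontrivial commitment that you leave unproved. Once you have the Radon lift $\nu$ with $(F_A)_*\nu=\mu$ (your Hahn--Banach/Riesz step is fine), inner regularity finishes the argument with no selection theorem: if $Q=F_A^{-1}(P)$ were Borel, then for every compact $K\subseteq Q$ the image $F_A(K)$ is a compact subset of $P$ with $\nu(K)\le\nu(F_A^{-1}(F_A(K)))=\mu(F_A(K))\le\mu_*(P)$, so $\nu(Q)\le\mu_*(P)$; applying the same to $P^c$ gives $\mu^*(P)\le\nu(Q)$, whence $P$ would be $\mu$-measurable, contradicting Lemma \ref{m-filters}. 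This step deserves to be spelled out in any case, since passing from ``$\Qcal$ is the preimage of a non-Borel set under a continuous surjection'' to ``$\Qcal$ is not Borel'' is not a formal implication outside the Polish setting, and your instinct that something must be said here is sound.
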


\begin{proof}
Let $\Pcal(\Ga)$ denote the collection of all subsets of $\Ga$.
Let $\Om=\{0,1\}^\Ga$ and let $\chi: \Pcal(\Ga) \to \Om$
denote the canonical map $\chi(A)= \ch_A$ for $A \subset \Ga$.
We regard $\Om$ as a compact space and let $\Bcal$ denote
its Borel $\sig$-algebra. Let $\mu=(\frac12(\del_0 + \del_1))^\Ga$
denote the product probability measure on $\Om$ and let $\Bcal_\mu$
denote the completion of $\Bcal$ with respect to $\mu$.

A well known and easy fact, which for completeness we will reproduce
below (Lemma \ref{m-filters}), is that a free ultrafilter on an
infinite set is nonmeasurable:
Viewing an element $p \in \beta\Ga
\setminus \Ga$ as an ultrafilter on $\Ga$, the collection
$\{\chi(A): A \in p\}\subset \Om$, is not $\mu$-measurable; i.e. not an
element of $\Bcal_\mu$. In particular it is not a Borel
subset of $\Om$. (In fact, it is not even Baire measurable, \cite{T}
and \cite{T1}.)

The compact space $\Om$ becomes a dynamical system when we let
$\Ga$ act on it by permuting the coordinates:
$$
\ga\om(\ga')=\om(\ga^{-1}\ga').
$$
Of course the measure $\mu$ is $\Ga$-invariant, but we will not
need this fact.
The action of $\Ga$ on $\Om$ extends to an action of $\beta\Ga$
in the natural way and we write $p\om$ for the image of
$\om \in \Om$ under $p \in \beta\Ga$. (In fact via this ``action"
$\beta\Ga$ is identified with the enveloping semigroup of the
system $(\Om,\Ga)$, see \cite[chapter 1]{Gl}.)

%For $A \subset \Ga$ we let $\ch_A:\Ga \to \{0,1\}$ denote
%the characteristic function of $A$; i.e. $\ch_A(\ga)=1$ iff
%$\ga \in A$.
For $A \subset \Ga$ and $p\in \beta\Ga$
set
$$
p\star A = \{\ga \in \Ga : \ga A^{-1} \in p\}
$$
and check that $\ga\ch_A=\ch_{\ga A}$ and $
p\chi(A)= p\ch_A =\ch_{p\star A}
=\chi(p\star A)$.
Moreover if $q\in \beta \Ga$ then
$$
pq\star A = p \star (q\star A).
$$
For convenience I sometimes
identify a subset $A \subset \Ga$ with the corresponding
element $\ch_A =\chi(A)$ in $\Om$.

%Let $f \in \ell^\infty(\Ga)$ and via the isomorphism of $C^*$-algebras
%of $\ell^\infty(\Ga)$ and $C(\beta\Ga)$ we write $\hat f$ for
%the unique extension of $f$ to $\beta\Ga$.

Let $\pi_e: \Om \to \{0,1\}$ denote the projection on the
$e$-component of $\Om$. Here $e$ is the neutral element of $\Ga$.
Let $\om_0=\ch_D$ be some fixed element of $\Om$ whose $\Ga$ orbit
is dense in $\Om$. Let $\psi : \ga \mapsto \ga\om_0$ be the
orbit map and let $\hat \psi$ denote its unique extension
to $\beta\Ga$. Thus $\hat\psi(q) = q\om_0$ for $q \in \beta\Ga$.
Finally recall that the semigroup product on $\beta\Ga$ is defined by
$$
A \in pq \iff \{\ga\in \Ga: \ga^{-1}A \in q\}\in p.
$$

Consider the map $L_p : \beta\Ga \to \beta\Ga$ and write
$\phi: \beta\Ga \to \{0,1\}$ for the map
$\phi=\pi_0 \circ \hat \psi \circ L_p$.
(Thus $\phi(q)=(pq\om_0)(e)$.) We define $J:\Om \to \Om$ by
$(J(\om))(\ga)=\om(\ga^{-1})$

We have
$$
\Qcal :=\phi^{-1}(1)=
\{q \in \beta\Ga: pq\om_0(e)=1\}.
$$
Now $pq\om_0(e)=1$ iff $e \in \chi^{-1}(pq\om_0)$ hence
\begin{align*}
\Qcal &=
\{q \in \beta\Ga: pq\om_0(e)=1\}\\&=
\{q \in \beta\Ga: e \in p\star q\om_0\} \\&=
\{q \in \beta\Ga: e \in p\star q\star D\} \\&=
\{q \in \beta\Ga:e \in\{\ga\in \Ga: \ga (q\star D)^{-1} \in p\}\}\\&=
\{q \in \beta\Ga:(q\star D)^{-1} \in p\}\\&=
\{q \in \beta\Ga: J(q\om_0) \in p\}.
\end{align*}
Thus
$(J\circ \hat\psi)(\Qcal)=p$ and since also
$(\hat\psi^{-1}\circ J^{-1})(p)=\Qcal$
we conclude that $\Qcal=\phi^{-1}(1)$ is not Borel measurable
in $\beta\Ga$. Finally, since also
$\Qcal = L_p^{-1}(\{q\in \beta\Ga:(q\om_0)(e)=1\})$
we see that $L_p$ is not Borel measurable.
\end{proof}

\br

In the next two lemmas let $\Om=\{0,1\}^\N$.
As above, we identify subsets $A$ of $\N$
with their characteristic functions $\ch_A\in \Om=\{0,1\}^\N$ and,
accordingly, filters on $\N$ with subsets of $\Om$.
Let $\phi: \Om \to \Om$ denote the ``flip" function defined by
$\phi(\om)_n = 1 - \om_n$.
We consider the measure space
$(\Om,\Sig_\la,\la)$, where $\Om=\{0,1\}^\N$, $\la$
is the Bernoulli measure
$\la=(\frac12(\del_0 +\del_1))^\N$,
and $\Sig_\la$ denotes the completion of the Borel $\sig$-algebra
with respect to $\la$. As usual we use the notation $\la_*$
and $\la^*$ for the induced inner and outer measures.

The assertions of the next lemma are
easily verified.

\begin{lem}\label{filters}
\begin{enumerate}
\item
The involution $\phi$ is measurable and it preserves $\la$.
\item
For $A \subset \N$ we have $\phi(\ch_A)=\ch_{A^c}$.
\item
If $\Fcal$ is a filter on $\N$ then $\phi\Fcal \cap\Fcal
=\emptyset$.
\item
If $\Fcal$ is a free filter on $\N$
(i.e. $\bigcap \Fcal =\emptyset$)
then, considered as a collection
of subsets of $\{0,1\}^\N$ it is a ``tail event", that is,
for every $m \in \N$,
$\Fcal= \{0,1\}^m \times \Fcal'$, with $\Fcal'
\subset \{0,1\}^\N$.
\item
A filter $\Fcal$ on $\N$ is an ultrafilter iff $\phi(\Fcal)
\cup \Fcal= \Om$.
\end{enumerate}
\end{lem}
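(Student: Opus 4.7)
The lemma collects five elementary facts about filters on $\N$ viewed through the coordinate identification $\Pcal(\N) \leftrightarrow \Om = \{0,1\}^\N$. My plan is to dispatch them in order, since each is essentially a one-line observation once the definitions are unwound.

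For (1), continuity (hence Borel measurability) of $\phi$ follows from the fact that it acts coordinatewise with each factor $\{0,1\}$ discrete. Invariance of $\la$ holds because each factor measure $\frac12(\del_0+\del_1)$ is symmetric under the swap $0 \leftrightarrow 1$, and $\la$ is the product of these (by uniqueness of product measure, this suffices). Part (2) is the tautology $1-\ch_A(n)=\ch_{A^c}(n)$.

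For (3), I would argue by contradiction: if $\ch_A \in \Fcal \cap \phi(\Fcal)$, then by (2) both $A$ and $A^c$ belong to $\Fcal$, so $\emptyset = A \cap A^c \in \Fcal$, contradicting properness of the filter. For (5), this is the standard characterization of ultrafilters: $\Fcal$ is maximal iff for every $A \subset \N$, either $A$ or $A^c$ lies in $\Fcal$, which under the coordinate identification becomes exactly $\Fcal \cup \phi(\Fcal) = \Om$.

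The one point that takes a brief argument is (4). The key is that freeness of $\Fcal$ lets us kill any finite initial segment: given $m$, for each $k \in \{1,\dots,m\}$ pick $B_k \in \Fcal$ with $k \notin B_k$ (possible since $\bigcap \Fcal = \emptyset$), and set $B = B_1 \cap \cdots \cap B_m \in \Fcal$; then $B \cap \{1,\dots,m\} = \emptyset$. Consequently, if $A \in \Fcal$ and $A'$ differs from $A$ only on $\{1,\dots,m\}$, then $A \cap B \in \Fcal$ is contained in $A'$, so $A' \in \Fcal$ by upward closure. This is precisely the assertion that membership in $\Fcal$ depends only on the tail, i.e. $\Fcal = \{0,1\}^m \times \Fcal'$ for some $\Fcal' \subset \{0,1\}^\N$.

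I do not anticipate any serious obstacle; the author's preface ``easily verified'' is accurate. The only minor point to watch throughout is the bookkeeping between the two viewpoints — $\Fcal$ as a family of subsets of $\N$ versus $\Fcal$ as a subset of $\Om$ via $A \leftrightarrow \ch_A$.
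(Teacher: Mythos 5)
Your proof is correct on all five points; in particular the argument for (4) — intersecting with a single $B \in \Fcal$ disjoint from $\{1,\dots,m\}$ and invoking upward closure — is exactly the standard verification. The paper itself supplies no proof, stating only that the assertions are ``easily verified,'' so your write-up simply fills in the routine details the author omitted.
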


\begin{lem}\label{m-filters}
Let $\Fcal$ be a free filter on $\N$. Then
\begin{enumerate}
\item
$\la_*(\Fcal) = 0$.
\item
$\la^*(\Fcal) \in\{0,1\}$.
\item
$\la^*(\Fcal) =1$ if $\Fcal$ is an ultrafilter.
\item
A free filter $\Fcal$ is measurable iff $\la^*(\Fcal)=0$,
and nonmeasurable iff  $\la^*(\Fcal)=1$. In particular,
every free ultrafilter is nonmeasurable.
\end{enumerate}
\end{lem}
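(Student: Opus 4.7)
My strategy is to invoke Kolmogorov's 0-1 law for the tail $\sig$-algebra $\Tcal=\bigcap_n\sig(X_n,X_{n+1},\ldots)$ of $(\Om,\la)$, together with the structural facts about $\Fcal$ and $\phi$ collected in Lemma \ref{filters}: that $\Fcal$ is invariant under changes of finitely many coordinates, that $\phi$ is a measure-preserving involution with $\phi\Fcal\cap\Fcal=\emptyset$, and that ultrafilters satisfy $\Fcal\cup\phi\Fcal=\Om$. The main auxiliary step is to show that for any Borel $B\sbs\Om$ its tail-saturation
\[
\hat B:=\bigcup_{m\in\N}\bigcup_{\sig\in\{0,1\}^m}\{\om\in\Om:(\sig,\om_{m+1},\om_{m+2},\ldots)\in B\}
\]
is Borel (a countable union of continuous preimages of $B$) and invariant under changing any finite prefix; hence $\hat B\in\Tcal$ and so $\la(\hat B)\in\{0,1\}$.

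For (1): if a Borel set $B\sbs\Fcal$ had $\la(B)>0$, then tail-invariance of $\Fcal$ would give $\hat B\sbs\Fcal$, and Kolmogorov would force $\la(\hat B)=1$. Thus $\Fcal$ would contain a Borel set of full measure, and so would $\phi\Fcal$; but this is incompatible with $\Fcal\cap\phi\Fcal=\emptyset$ (Lemma \ref{filters}(3)). Hence no such $B$ exists and $\la_*(\Fcal)=0$.

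For (2) and (3): given any Borel hull $C\supset\Fcal$, the ``tail interior'' $C^\circ:=\Om\setminus\widehat{\Om\setminus C}$ is Borel and tail-invariant, with $\Fcal\sbs C^\circ\sbs C$ (since any $\om\in\Fcal$ has all its tail-equivalents in $\Fcal\sbs C$); so $\la(C^\circ)\in\{0,1\}$ by Kolmogorov. Picking $C_n$ with $\la(C_n)\to\la^*(\Fcal)$, the sandwich $\la^*(\Fcal)\le\la(C_n^\circ)\le\la(C_n)$ yields $\la(C_n^\circ)\to\la^*(\Fcal)$, and since each $\la(C_n^\circ)\in\{0,1\}$ and $\{0,1\}$ is closed, we conclude $\la^*(\Fcal)\in\{0,1\}$, proving (2). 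For a free ultrafilter, $\Om=\Fcal\cup\phi\Fcal$ together with outer-measure subadditivity gives $1\le 2\la^*(\Fcal)$, so $\la^*(\Fcal)\ge 1/2$; combined with (2), this is (3). Part (4) is then immediate: measurability of $\Fcal$ is equivalent to $\la_*(\Fcal)=\la^*(\Fcal)$, which by (1) reduces to $\la^*(\Fcal)=0$, while nonmeasurability is equivalent to $\la^*(\Fcal)=1$ by (2); free ultrafilters fall into the latter case by (3).

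The only delicate point in the scheme is verifying that $\hat B$ is genuinely Borel rather than merely analytic: this rests on the countability of tail-equivalence classes in $\Om$, which lets us replace the existential ``some tail-equivalent of $\om$ lies in $B$'' by an explicit countable union of continuous preimages. Once this is granted, everything else is bookkeeping around the 0-1 law and the free involution $\phi$.
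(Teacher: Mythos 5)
Your proof is correct and follows essentially the same route as the paper: $\Fcal$ is a tail event, so the zero--one law forces $\la_*(\Fcal),\la^*(\Fcal)\in\{0,1\}$, and the flip involution $\phi$ (disjointness $\Fcal\cap\phi\Fcal=\emptyset$ for part 1, covering $\Fcal\cup\phi\Fcal=\Om$ for part 3) pins down the values. The only difference is that you carefully justify, via the explicit Borel tail-saturation $\hat B$, why the inner and outer measures of a tail-invariant set are $0$ or $1$ --- a step the paper's proof treats as known and leaves implicit.
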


\begin{proof}
If $\Fcal$ is a free filter on $\N$
and $\Gcal \subset \Fcal$ is a measurable tail event then it has
measure either $0$ or $1$. Thus $\la^*(\Fcal) \in \{0,1\}$.
This proves part 2. We also have
$\la_*(\Fcal) \in \{0,1\}$ and since
$\phi(\Fcal) \cap\Fcal =\emptyset$ it follows that
$$
1 = \la(\Om) \ge \la_*(\phi\Fcal) + \la_*(\Fcal)
= 2 \la_*(\Fcal).
$$
We conclude that $\la_*(\Fcal)=0$, proving part 1.
If $\Fcal$ is an ultrafilter then
$\Fcal \cup \phi\Fcal=\{0,1\}^\N$ and we conclude that
$$
1 = \la(\Om) \le \la^*(\phi\Fcal) + \la^*(\Fcal)
= 2 \la^*(\Fcal),
$$
whence $\la^*(\Fcal)=1$. This proves part 3. Part 4 is now clear.
\end{proof}

\br

\section{On the center of $\Ga^{\Dcal}$, the universal
distal Ellis group of $\Ga$}

Let $\Ga$ be a discrete abelian group. Let $\Dcal$
denote the closed $\Ga$-invariant subalgebra of (complex
valued) distal functions in $\ell^\infty(\Ga)$.
Let $D =\Ga^\Dcal=|\Dcal|$
denote the corresponding Gelfand space. It is well known that
$D$ is the largest right topological group compactification
of $\Ga$.

\begin{thm}\label{F}
Let $\Ga$ be an infinite discrete abelian group.
The topological center
of $D=\Ga^\Dcal$ is the same as the algebraic center and,
when $\Ga=\Z$, it also coincides with the canonical image of
$\Ga$ in $D$.
\end{thm}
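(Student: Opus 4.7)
The plan is to split the statement into two claims: first, for a general abelian $\Ga$, the topological and algebraic centers of $D$ coincide, and second, for $\Ga=\Z$, this common center is exactly the canonical image of $\Z$. The first claim rests on showing that $\Ga$ itself sits inside the algebraic center $Z(D)$. For each $\ga\in\Ga$, left translation $L_\ga$ is continuous (because $\Ga$ acts on its Gelfand compactification $D=|\Dcal|$ by the continuous extensions of the isometric translations on $\Dcal$), and $R_\ga$ is continuous by the right topological property. Given any $q\in D$ and any net $\ga_i\to q$ with $\ga_i\in\Ga$, the identity $\ga\ga_i=\ga_i\ga$ passes to the limit to yield $\ga q=q\ga$, so $\ga\in Z(D)$. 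Granted this, the first claim follows in both directions: if $L_p$ is continuous, the same net argument, combined with $p\ga_i=\ga_i p$ (since $\ga_i\in Z(D)$), gives $pq=qp$, so $p$ is in the algebraic center; conversely, if $p\in Z(D)$, then $L_p=R_p$ as functions on $D$, and the latter is continuous.

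For the second claim, with $\Ga=\Z$, the image of $\Z$ already lies in $Z(D)$ by the first claim, and the substance is the reverse inclusion. I would use the description of $D$ as an inverse limit, over all minimal distal $\Z$-flows $(X,T)$, of their enveloping groups $E(X,T)$; any central element of $D$ projects to a central element of each $E(X,T)$, and the plan is to find enough such flows whose enveloping groups have sufficiently rigid centers to force all compatible projections to be integer powers of $T$. Concretely, one would compute $Z(E(X,T))$ for non-equicontinuous $2$-step distal flows, for example Heisenberg nilflows on $\T^2$ at an irrational parameter $\alpha$: these enveloping groups fit into a short exact sequence $1\to K_\alpha\to E(X,T)\to\T_\alpha\to 1$ with abelian equicontinuous quotient $\T_\alpha$, and their central subgroups carry a distinguished ``integer index'' visible on $\T_\alpha$.

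The main obstacle is controlling the image of $Z(D)$ in the Bohr compactification $b\Z$. The equicontinuous factor map $D\to b\Z$ is a surjection of abelian groups, so at this level alone no constraint beyond density is visible; the image of $Z(D)$ in $b\Z$ already contains the dense cyclic subgroup $\Z$. To rule out the remaining elements of $b\Z\setminus\Z$ one has to pass to $2$-step (and, if necessary, higher-step) distal extensions and exploit the coherence required of the ``integer index'' across a sufficiently rich family of $\alpha$'s. It is precisely the rigidity of $\Z$ as a rank-one free abelian group that forces these coherent lifts to detect membership in $\Z$ rather than in its profinite completion $\hat\Z$, so the argument relies on features not available for general infinite discrete abelian groups.
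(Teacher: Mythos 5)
Your first half --- the coincidence of the topological and algebraic centers of $D$ for any abelian $\Ga$ --- is correct and is essentially the paper's own argument: one checks that the image of $\Ga$ lies in the algebraic center by passing $\ga\ga_\al=\ga_\al\ga$ to the limit along a net $\ga_\al\to q$, and then the two implications follow from continuity of $L_p$ (resp.\ of $R_p$) exactly as you say.

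The second half, however, has a genuine gap, and I do not believe the route you sketch can be completed as stated. Your plan controls only the image of $Z(D)$ in the maximal equicontinuous factor $b\Z$ together with an ``integer index'' read off from a fixed family of two-step nilflows. Even granting that the coherence argument pins this image down to $\Z\subset b\Z$, this says nothing about central elements lying in the (enormous) kernel of $D\to b\Z$: such an element can perfectly well act as an automorphism of \emph{every} nilflow --- for instance as a vertical rotation $(x,y)\mapsto(x,y+c_\al)$ on each Anzai skew product, over the identity of the base --- so no amount of coherence across explicit nilflows (of any step) will force it to equal an integer in $D$. The missing idea, which is the entire content of the paper's proof, is to let the counterexample flow depend on $p$: if $p\in Z(D)\setminus\Z$ one first finds a \emph{metric} minimal distal $(Y,T)$ and $y_0\in Y$ with $py_0\notin\Z y_0$, observes that centrality forces $p$ to act as an automorphism of every factor of $(D,\Z)$, and then uses the Glasner--Weiss generic skew-product machinery on $X=Y\times S^1$: a Baire-category argument over the closure of $\{G_\tet^{-1}(T\times\id)G_\tet\}$ produces a minimal distal extension $T_\phi$ for which the commutation relation would force $\lim\phi_{n_i}(y_0)=\lim\phi_{n_i}(py_0)$ along a return-time sequence, while for a residual set of cocycles $|\phi_{n_j}(y_0)-\phi_{n_j}(py_0)|>1$ infinitely often --- a contradiction. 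Without some such construction tailored to the pair $(y_0,py_0)$ (and here genericity, not explicit formulas, is what makes it work), your argument establishes at best that $Z(D)$ maps onto $\Z$ inside $b\Z$, not that $Z(D)=\Z$ in $D$.
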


\begin{proof}
In order to simplify our notation we will identify elements
of $\Ga$ with their images in $D$.
The coincidence of the topological and algebraic centers of $D$
is easy:
Suppose first that $p \in D$ is in the algebraic center of this group.
Then, as right multiplication is always continuous,
we have for any convergent net $q_\al \to q$ in $D$
$$
pq=qp=\lim q_\al p= \lim pq_\al,
$$
i.e. $L_p: D \to D$ is continuous.

Conversely, assume that $p$ is in the topological center;
i.e. $L_p: D \to D$ is continuous.
We note that if $q$ is an element of $D$ then $\ga q = q \ga$
for every $\ga \in G$. In fact choosing a convergent net
$\Ga\ni \ga_\al \to q$, by the commutativity of $\Ga$,
$$
\ga q = \ga \lim \ga_\al =
\lim \ga\ga_\al =\lim \ga_\al \ga= q \ga.
$$
Now, with this in mind, we have
$$
pq = p\lim \ga_\al = \lim p\ga_\al =
\lim \ga_\al p = qp,
$$
so that $p$ is indeed an element of the center.

Now to the more delicate task of showing that this center
coincides with $\Ga$. Let $p \in D$ be a central element.
If $p \not\in \Ga$ then there exists a {\it metric} minimal
distal dynamical system $(Y,\Ga)$ and a point $y_0\in Y$
such that
\begin{equation}\label{p}
py_0\not\in \Ga y_0.
\end{equation}
By assumption the map
$L_p:D \to D$ is continuous (in fact a homeomorphism)
and as we have seen it also commutes with every element
of $\Ga$. In other words, $L_p$ is an automorphism of
the system $(D,\Ga)$.
Now the dynamical system $(D,\Ga)$ is the universal distal
system and therefore, it admits a unique homomorphism
of dynamical systems $\hat\phi:(D,\Ga)\to (E(Y,\Ga),\Ga)$
onto the enveloping semigroup $E=E(Y,\Ga)$ (which by a theorem
of Ellis is in fact a group) such that $\hat\phi(e_D)=e_E$.
Now the map $\phi:p \mapsto \hat\phi(p)y_0$ (which we
write simply as $p \mapsto py_0$)
is a homomorphism $\phi: (D,\Ga) \to (Y,\Ga)$ with
$\phi(e)=y_0$. If $y_\al \to y$ is a convergent net
in $Y$ then there are $q_\al\in D$ with $y_\al=q_\al y_0$.
With no loss of generality we have $q_\al \to q$ in $D$,
so that in particular $y = \lim y_\al =\lim q_\al y_0=
qy_0$. Now we see that
\begin{align*}
py & = pqy_0  = (p \lim q_\al) y_0\\
& =(\lim pq_\al) y_0 = \lim p(q_\al y_0) \\
& = \lim p y_\al.
\end{align*}
Thus $p$ acts continuously on $Y$. Since also $p\ga=\ga p$
for every $\ga \in \Ga$ we conclude that $p$ is an automorphism
of the system $(Y,\Ga)$.

Note that this argument shows that {\it $p$ acts as an automorphism
of every factor of $(D,\Ga)$}. Therefore, our proof will be
complete when we find a minimal distal dynamical system
$(X,\Ga)$ extending $(Y,\Ga)$, say $\pi:(X,\Ga) \to (Y,\Ga)$,
where $p$ is not an automorphism.

\br

At this stage, in order to be able to use a method
of construction developed by Glasner and Weiss in \cite{GW},
we specialize to the case $\Ga=\Z$.
In particular the system $(Y,\Ga)$ which was
singled out in the above discussion has now the form
$(Y,T)$ where $T: Y \to Y$ is a self homeomorphism of $Y$
determined by the element $1 \in \Z$. Of course we can assume
that $Y$ is non-periodic (i.e. infinite).

The following construction is a special case of a general
setup designed in \cite{GW} for providing minimal extensions
of a given non-periodic minimal $\Z$-system $(Y,T)$.
We refer the reader to \cite{GW} for more details.

Set $X = Y \times K$ where $K$ denotes the circle group
$K=S^1=\{z\in \C: |z|=1\}$.
Let $\Tet$ be the family of continuous maps $\tet:Y \to K$.
For each $\tet \in \Tet$ let $G_\tet: X \to X$ be the map
$G_\tet(y,z)=(y,z\tet(y))$ and
$S_\tet=G_\tet^{-1}\circ (T\times {\id})\circ G_\tet$.
Thus
\begin{equation}\label{tet}
S_\tet :X \to X, \qquad
S_\tet(y,z) = (Ty,z\tet(y)\tet(Ty)^{-1}).
\end{equation}
Form the collection
$$
\Scal(T)=\{G_\tet^{-1}\circ (T\times {\id})\circ G_\tet
: \tet \in \Tet\}.
$$

Theorem 1 of \cite{GW} ensures that in the set $\overline{\Scal(T)}$
(closure with respect to the uniform convergence topology in
$\Homeo(X)$) there is a dense $G_\del$ subset $\Rcal$ such that
for every $R \in \Rcal$ the system $(X,R)$ is minimal, distal, and the
projection map $\pi:X \to Y$ is a homomorphism of dynamical systems
($\pi R(y,z)=T\pi(y,z)=Ty$).
Note that every $R \in \overline{\Scal(T)}$ has the form
$$
R=T_\phi :X \to X, \quad \text{where}\quad
T_\phi(y,z)= (Ty,z\phi(y)),
$$
for some continuous map $\phi: Y \to K$.
We will often use the fact that for $n \in \N$
the $n$-th iteration of $T_\phi$ has the form
\begin{equation}\label{cocycle}
T^n_\phi(y,z)= (T^n y,z\phi_n(y)),\quad{\text{where}}\quad
\phi_n(y) = \phi(T^{n-1}y)\cdots\phi(Ty)\phi(y).
\end{equation}
Note that when $\phi$ has the very special form
$\phi(y) = \tet(Ty)^{-1}\tet(y)$ for some continuous
$\tet:Y \to K$, the equation (\ref{cocycle}) collapses:
\begin{equation}\label{coboundary}
\phi_n(y) = \tet(T^n y)^{-1}\tet(y),
\quad{\text{hence}}\quad
S^n_\tet(y,z)= (T^n y, z\tet(T^n y)^{-1}\tet(y)).
\end{equation}

We temporarily fix an element $R=T_\phi \in \Rcal$.
As observed above, the element $p \in D$ defines an automorphism
of the system $(X,T_\phi)$; moreover we have for every
$x=(y,z) \in X$:
$$
\pi(px) = p\pi(x) = py.
$$
This last observation implies that $p:X \to X$ has the
form $p(y,z)=(py,\om(y,z))$ for some continuous map $\om:
Y\times K \to K$.

\begin{lem}
The function $\om$ has the form $\om(y,z)=z\psi(y)$ for some continuous
map $\psi:Y \to K$, whence
$$
p(y,z)=(py,z\psi(y))
$$
\end{lem}

\begin{proof}
There exists a net $\{n_\nu\}_{\nu\in I}$ in $\Z$ such that
$p = \lim n_\nu$ in $D$. Thus,
%denoting
%$$
%\phi_n(y) = \phi(T^{n-1}y)\cdots\phi(Ty)\phi(y),
%$$
%we have
for every $(y,z) \in X$
$$
p(y,z) = \lim T_\phi^{n_\nu}(y,z) =
\lim (T_\phi^{n_\nu}y,z\phi_{n_\nu}(y))=
(py,z\psi(y)),
$$
where the {\it point-wise} limit $\psi(y):= \lim \phi_{n_\nu}(y)$
is necessarily a continuous function.
\end{proof}

The commutation relation $pT_\phi=T_\phi p$ now reads:
\begin{align*}
pT_\phi(y,z) & = p(Ty,z\phi(y))=(pTy,z\phi(y)\psi(Ty))\\
& = T_\phi p(y,z) = T_\phi(py,z\psi(y))\\
& = (T p y,z\psi(y)\phi(py)).
\end{align*}
In turn this implies:
\begin{equation}\label{psi}
\phi(y)\psi(Ty)=\psi(y)\phi(py).
\end{equation}\label{n}
Similarly the commutation relations $pT^n_\phi=T^n_\phi p$ yield:
\begin{equation}\label{psi-n}
\phi_n(y)\psi(T^n y)=\psi(y)\phi_n(py).
\end{equation}

Next consider any sequence
$n_i \nearrow \infty$ such that
\begin{itemize}
\item
$\lim T^{n_i}y_0 =y_0$,
\item
$\lim \phi_{n_i}(y_0) = z'$, and
\item
$\lim \phi_{n_i}(py_0) = z''$.
\end{itemize}
%This can be done since by the minimality of $T_\phi$
%there is $n_i \nearrow \infty$ with $T^{n_i}_\phi
%(y_0,1)=(T^{n_i}y_0,\phi_{n_i}(y_0)) = (y_0,1)$.

Applying (\ref{psi-n}) and taking the limit as $i \to \infty$
we get $\psi(y_0)z'' = z'\psi(y_0)$, whence
necessarily also $z' = z''$.

The proof of Theorem \ref{F} will be complete when we
next show that for a residual subset $\Rcal_1$ of $\overline
{\Scal(T)}$, we have $\lim \phi_{n_i}(y_0) = z'
\ne  z'' = \lim \phi_{n_i}(py_0)$,
whenever $R=T_\phi \in \Rcal_1$. Then for any element $T_\phi
\in \Rcal \cap \Rcal_1$, $(X,T_\phi)$ will serve as a minimal
distal system where $p$ is not an automorphism.

\begin{prop}\label{prop}
For a given sequence $n_i \nearrow \infty$ with
$\lim T^{n_i}y_0 =y_0$, the set
$$
\Rcal_1 = \{T_\phi \in \overline{\Scal(T)}:\ \forall i\ \exists j>i,
\ |\phi_{n_j}(y_0) - \phi_{n_j}(py_0)| > 1\}
$$
is a residual subset of $\overline{\Scal(T)}$.
\end{prop}

%\begin{proof}
%For $i\in \N$ set
%$$
%E_i=\{T_\phi \in \overline{\Scal(T)}:\ \exists j>i,
%\ |\phi_{n_j}(y_0) - \phi_{n_j}(py_0)| > 1\}.
%$$
%Clearly $E_i$ is an open subset of $\overline{\Scal(T)}$
%and for $i < k$ we have $E_k \subset E_i$.

\begin{proof}
For $i\in \N$ and $\eta > 0$ set
$$
E_{i,\eta}=\{T_\phi \in \overline{\Scal(T)}:\ \exists j>i,
\ |\phi_{n_j}(y_0) - \phi_{n_j}(py_0)| > 1 + \eta\}.
$$
Clearly $E_{i,\eta}$ is an open subset of $\overline{\Scal(T)}$
and for $i < k$ we have $E_{k,\eta} \subset E_{i,\eta}$.

\begin{lem}\label{inv}
Given $i$ and  $\eta>0$,
for every $\tet_0\in \Tet$ there exists an $i_0 > i$ such that
$$
G^{-1}_{\tet_0} E_{i_0,\eta} G_{\tet_0} \subset E_{i_0,\eta/2}.
$$
\end{lem}

\begin{proof}
Fix $\tet_0 \in \Tet$. For sufficiently large $i_0$, for all $j>i_0$
the distances $d(T^{n_j}y_0,y_0)$ and $d(T^{n_j}py_0,py_0)$
are so small that
$$
|\tet(T^{n_j}y_0)^{-1}\tet(y_0)\phi_{n_j}(y_0)
- \tet(T^{n_j}py_0)^{-1}\tet(y_0)\phi_{n_j}(py_0)| > 1 + \eta/2
$$
holds whenever
$$
|\phi_{n_j}(y_0) - \phi_{n_j}(py_0)| > 1 + \eta.
$$
\end{proof}

%
%Moreover it is easily checked that for every $\tet \in \Tet$
%there exists an $i_0$ such that
%\begin{equation}\label{inv}
%G_\tet^{-1} \circ E_{i_0} \circ G_\tet \subset E_{i_0}.
%\end{equation}
%In fact,
%
%
%for sufficiently large $i_0$ for all $j>i_0$
%the distances $d(T^{n_j}y_0,y_0)$ and $d(T^{n_j}py_0,py_0)$
%are so small that
%$$
%|\tet(T^{n_j}y_0)^{-1}\tet(y_0)\phi_{n_j}(y_0)
%- \tet(T^{n_j}py_0)^{-1}\tet(y_0)\phi_{n_j}(py_0)| > 1,
%$$
%holds whenever
%$$
%|\phi_{n_j}(y_0) - \phi_{n_j}(py_0)| > 1.
%$$
%
%

%We will show that $E_i$ is also dense in $\overline{\Scal(T)}$.
%For this it suffices to show that
%$G^{-1}_\tet \circ (T \times {\id}) \circ G_\tet
%\in \overline{E_i}$ for every $\tet \in \Tet$, i.e.
%$T \times {\id} \in  G_\tet \overline{E_i}  G^{-1}_\tet$.
%
%Now for a fixed $\tet_0$ there is by Lemma \ref{inv},
%an $i_0 > i$ with
%$G_{\tet_0}^{-1}  E_{i_0}  G_{\tet_0} \subset E_{i_0}$,
%hence it suffices to show that
%$T \times {\id} \in \overline{E_{i_0}}$,
%since then
%$$
%T \times {\id} \in \overline{E_{i_0}} \subset
%G_{\tet_0}\overline{E_{i_0}} G_{\tet_0}^{-1}
%\subset
%G_{\tet_0}\overline{E_i} G_{\tet_0}^{-1}.
%$$
%Finally the next lemma will prove this last assertion
%and therefore also the density of $E_i$ for every $i$.

We will show that $E_{i,\eta}$ is also dense in $\overline{\Scal(T)}$.
For this it suffices to show that
$G^{-1}_\tet \circ (T \times {\id}) \circ G_\tet
\in \overline{E_{i,\eta}}$ for every $\tet \in \Tet$, i.e.
$T \times {\id} \in  G_\tet \overline{E_{i,\eta}}  G^{-1}_\tet$.

Now for a fixed $\tet_0$ there is by Lemma \ref{inv},
an $i_0 > i$ with
$G^{-1}_{\tet_0} E_{i_0,2\eta} G_{\tet_0} \subset E_{i_0,\eta}$,
hence it suffices to show that
$T \times {\id} \in \overline{E_{i_0,2\eta}}$,
since then
$$
T \times {\id} \in \overline{E_{i_0,2\eta}} \subset
G_{\tet_0}\overline{E_{i_0,\eta}} G_{\tet_0}^{-1}
\subset
G_{\tet_0}\overline{E_{i,\eta}} G_{\tet_0}^{-1}.
$$
Finally the next lemma will prove this last assertion
and therefore also the density of $E_{i,\eta}$ for every $i$
and $0 < \eta < 1$.

\begin{lem}
Given $i \in \N, 0 < \eta < 1$ and $\ep >0$ there exists
$\tet \in \Tet$ such that
\begin{enumerate}
\item
$d(T \times {\id}, G_\tet^{-1} \circ (T \times {\id}) \circ G_\tet)
< \ep$.
\item
$G_\tet^{-1} \circ (T \times {\id}) \circ G_\tet \in E_{i,\eta}$.
\end{enumerate}
\end{lem}

\begin{proof}
Let $I=[0,1]$ and set $h(0)=h(1/3)=h(2/3)=1, \ h(1) = -1$
and extend this function in an arbitrary way to a continuous
$h: I \to S^1$.
Choose $\del >0$ such that $|t - s|< \del$ implies
$|h(t)^{-1}h(s)-1| < \ep$.
Let $m \in \N$ be such that $2/m <\del$.
Let $U_1$ and $U_2$ be open neighborhoods of $y_0$ and $py_0$,
respectively, in $Y$ such that for $s=1,2$, the sets
$U_s, TU_s,\dots, T^{m-1}U_s$
are mutually disjoint. (Here we use the facts that $Y$ is
infinite and that $py_0 \not\in \{T^j y_0: j \in \Z\}$ (\ref{p}).)
Choose $k>i$ so that
$T^{n_k}y_0 \in U_1$ and $T^{n_k}py_0 \in U_2$.
Let $K_s \subset U_s,\ s=1,2$, be Cantor sets such that
$y_0, T^{n_k}y_0 \in K_1$ and $py_0, T^{n_k}py_0 \in K_2$.

Next define:
$$
g(y_0)=0, \ g(T^{n_k}y_0 )=1/3,\
g(py_0)=2/3, \ g(T^{n_k}py_0) = 1
$$
and extend this function in an arbitrary way to a continuous
function $g: K_1 \cup K_2 \to S^1$. We now extend $g$ to the
set $\cup_{j=0}^{m-1}T^j(K_1 \cup K_2)$ by setting
$g(y)=g(T^{-j}y)$ for $y \in T^j(K_1 \cup K_2)$.
Extend $g$ continuously over all of $Y$ in an arbitrary way.

Set
$$
\tilde{g}(y)= \frac1m \sum_{j=0}^{m-1} g(T^jy).
$$
Clearly $\tilde{g}\rest (K_1 \cup K_2) = g\rest (K_1 \cup K_2)$,
so that
$$
\tilde{g}(y_0)=0, \ \tilde{g}(T^{n_k}y_0 )=1/3,\
\tilde{g}(py_0)=2/3, \ \tilde{g}(T^{n_k}py_0) = 1.
$$
%
%$$
%\tilde{g}(y_0)=\tilde{g}(T^{n_k}y_0 )=\tilde{g}(py_0)=1,
% \quad {\text{and}}\quad \tilde{g}(T^{n_k}py_0) = -1.
%$$
Finally define $\tet: Y \to S^1$ by $\tet(y)=h(\tilde{g}(y))$.
Note that
\begin{equation}\label{values}
\tet(y_0)= \tet(T^{n_k}y_0 )=
\tet(py_0)=1, \quad {\text{and}}\quad \tet(T^{n_k}py_0) = -1.
\end{equation}

\br

Now
$$
G^{-1}_\tet\circ (T \times {\id}) \circ G_\tet (y,z)=
(Ty, z\tet(Ty)^{-1}\tet(y)) = (Ty, zh(\tilde{g}(Ty))^{-1}
h(\tilde{g}(y))).
$$
But
$$
|\tilde{g}(Ty) - \tilde{g}(y)|< 2/m < \del,
$$
hence $|h(\tilde{g}(Ty))^{-1}h(\tilde{g}(y))-1| < \ep$
and therefore also
$$
d(T \times {\id}, G_\tet^{-1} \circ (T \times {\id}) \circ G_\tet)
< \ep.
$$

\br

This proves part (1) of the lemma and we now proceed to prove
part (2).
We have to show that
$G_\tet^{-1} \circ (T \times {\id}) \circ G_\tet \in E_i$.
But this map has the form
$$
S_\tet :X \to X, \qquad
S_\tet(y,z)= G_\tet^{-1}\circ (T\times {\id})
\circ G_\tet = (Ty,z\tet(Ty)^{-1}\tet(y)),
$$
so that, by (\ref{coboundary}), we have to show that
there exists $j > i$ with
$$
\ |\tet(T^{n_j}y_0)^{-1}\tet(y_0) -
\tet(T^{n_j}py_0)^{-1}\tet(py_0)| > 1 + \eta.
$$
Since, by the choice of $\tet$ (\ref{values}), we have
$$
\ |\tet(T^{n_k}y_0)^{-1}\tet(y_0) -
\tet(T^{n_k}py_0)^{-1}\tet(py_0)|
= | 1 - (-1)|=2 > 1 + \eta,
$$
this completes the proof of the lemma.
\end{proof}

To conclude the proof of Proposition \ref{prop}
observe that, for instance, the dense $G_\del$ set
$\bigcap_{i=1}^\infty E_{i,1/2}$ is contained in $\Rcal_1$.
\end{proof}

This also concludes the proof of Theorem \ref{F}.

\end{proof}

%Let $\mu$ be a $T$-invariant measure on $Y$ and let
%$\{e_i(y)\}_{i=0}^\infty$, with $e_0(y)\equiv 1$,
%be an orthonormal basis for $L^2(Y,\mu)$.
%Let
%$$
%\om(y,z)=\sum_{j=0}^\infty\sum_{i=0}^\infty c_{ij}e_i(y)z^j
%$$
%be the unique expansion of $\om(y,z)$ in $L^2(X,\mu\times\la)$,
%where $\la$ is Lebesgue measure on $K=S^1$.
%By (\ref{omega}) we now have:
%$$
% \sum_{j=0}^\infty\sum_{i=0}^\infty c_{ij}e_i(Ty)(z\phi(y))^j =
% \left(\sum_{j=0}^\infty\sum_{i=1}^\infty c_{ij}e_i(y)z^j\right)
%\cdot\phi(py),
%$$
%whence
%$$
%\sum_{i=0}^\infty c_{ij}e_i(Ty)\phi(y)^j =
%\sum_{i=0}^\infty c_{ij}e_i(y)\phi(py),
%$$
%for each $j$,

%and finally, by uniqueness,
%$$
%c_{ij}\phi(y)^j = c_{ij}\phi(py)
%$$
%for every $i$ and $j$.
%
%
%For $j = 0$, if $c_{ij}\not = 0$ we get $\phi(py)\equiv 1$,
%but this contradicts the minimality of $T_\phi$. For $j\not\in
%\{0,1\}$ if $c_{ij}\not = 0$ we get $\phi(y)^j = z^j\phi(py)$,
%hence $z^j \equiv \phi(y)^j \phi(py)^{-1}$ which is absurd.
%Thus we conclude that $c_{ij} = 0$ for $j\ne 1$.
%Finally when $j=1$ we get $c_{i1}\phi(y) = c_{i1}z\phi(py)$

%, so that
%$$
%\om(y,z) = \sum_{i=1}^\infty c_{i1}e_i(y) z
%= \rho(y) z,
%$$
%with $\rho(y) = \sum_{i=1}^\infty c_{i1}e_i(y)$, as required.
%\end{proof}
%
%Now the relation (\ref{omega}) reads
%$$
%\om(Ty,z\phi(y)) = \rho(Ty)z\phi(y) =
%\om(y,z)\phi(py) = \rho(y) z \phi(py),
%$$
%hence

\section{Addendum: On the image of Borel sets}
%\title[On the image of Borel sets]
%{On the image of Borel sets.
%\\
%{\tiny{An addendum to
%``On two problems concerning topological centers", Topology Proc. 33 (2009), 29--39.}}}
%
%\author{Eli Glasner}
%
%\address{Department of Mathematics\\
%     Tel Aviv University\\
%         Tel Aviv\\
%         Israel}
%\email{glasner@math.tau.ac.il}
%
%\address {Institute of Mathematics\\
% Hebrew University of Jerusalem\\
%Jerusalem\\
% Israel}
%\email{weiss@math.huji.ac.il}
%\email{iftahday@math.tau.ac.il}         
%\email{glasner@math.tau.ac.il}
% \urladdr{http://www.math.tau.ac.il/$^\sim$glasner}

%
%\date{September 5, 2016}
%
%\begin{abstract}
%We prove a lemma necessary for closing a gap in the proof of a theorem in the paper 
%referred to in the title of this note.
%\end{abstract}
%

%\subjclass[2000]{47A16, 47A35, 37A05, 37A25, 37B05, 37B20, 54H20, 60G15}

\keywords{}

%\thanks{This research was partially supported by Grant No 2006119
%from the United States-Israel Binational Science Foundation (BSF)}

%\thanks{This research was supported by a grant of ISF 668/13}

%\thanks{{This work is supported by ISF grant 
%\#1157/08}}

%\maketitle
%\tableofcontents

I am indebted to Professors Neil Hindman and Dona Strauss who pointed out to me 
a gap in the proof of Theorem 2.1.
% in \cite{G}.
%The last sentence of this proof, as follows.
Since the compact Hausdorff space $\beta\Ga$ is not Polish, in order to justify the claim
(in the last sentence of the proof):
\begin{quote}
Finally, since also 
$Q = L_p^{-1}(\{q \in \beta \Ga : (q\om_0)(e) = 1\})$ we see that 
$L_p$ is not Borel measurable,
\end{quote}
one needs an additional lemma, which I provide below.

\br

\begin{lem}
Let $X$ be a compact Hausdorff space.
Let $B \subset X$ be a Borel set (i.e. a member of the smallest $\sig$-algebra
containing the open sets).
Let $f : X \to C$ be a continuous surjection, where $C$ denotes the Cantor set.
Suppose also that $B = \{x \in X : f(x) \in f(B)\} = f^{-1}(f(B))$.
Then, $f(B)$ is universally measurable.
\end{lem}

\begin{proof}
Let $\mu$ be a probability measure on $C$.
Denote by $\mu^*$ the corresponding outer-measure on $C$ and
recall that the restriction of $\mu^*$ to $\Bcal_\mu$, the completion of the $\sig$-algebra of 
Borel sets with respect to $\mu$, is a measure.

As $X$ is a compact Hausdorff space 
there exists a regular probability measure $\nu$ on $X$
which extends $\mu$ in the sense that $\nu(f^{-1}(A)) =\mu(A)$
for every Borel set $A \subset C$.
(Here we use the Hahn-Banach and the Riesz representation theorems,
see e.g. Rudin's Real and complex analysis, Theorem 2.14.)

Write $B^c = X \setminus B$ and note that
$f(B^c) = f(B)^c = C \setminus f(B)$.

If either $\mu^*(f(B))=0$ or $\mu^*(f(B^c))=0$ then $f(B)$
is $\mu^*$-measurable and we are done. So we assume that
both outer measures are positive.

Given $\ep > 0$, there is a compact set $K \subset B$ with $\nu(B \setminus K) < \ep$. 
Let $\tilde{K} = f(K) \subset f(B)$, a compact subset of $f(B)$ with
$K \subset f^{-1}(\tilde{K}) \subset B$.

Also, 
there is a compact set $L \subset B^c$
with $\nu(B^c \setminus L) < \ep$. Let $\tilde{L} = f(L) \subset f(B^c)$, 
a compact subset of $f(B^c)$ with
$L \subset f^{-1}(\tilde{L}) \subset B^c$.
 
Now we have
$$
\mu(\tilde{K} \cup \tilde{L}) = \mu(K \cup L)  > 1 -  2 \ep.
$$
As $\ep$ is arbitrary this implies that $f(B)$ is $\mu^*$-measurable.
\end{proof}

%
%\br
%
%\bibliographystyle{amsplain}
%\begin{thebibliography}{10}
%
%\bibitem{G}
%Eli Glasner,
%{\em On two problems concerning topological centers},
%Topology Proc. {\bf 33}, (2009), 29--39.
%
%\end{thebibliography}
%
%

\end{document}